\documentclass[11pt,reqno]{amsart}
\usepackage[margin=1.15in]{geometry}
\usepackage{amsmath,amssymb,amsthm,mathrsfs}
\usepackage{tikz}
\usepackage[colorlinks=true,linkcolor=blue,citecolor=blue]{hyperref}
\usetikzlibrary{arrows.meta}
\usepackage{tikz-cd}
\theoremstyle{plain}
\newtheorem{theorem}{Theorem}[section]
\newtheorem{proposition}[theorem]{Proposition}
\newtheorem{lemma}[theorem]{Lemma}
\newtheorem{corollary}[theorem]{Corollary}
\theoremstyle{definition}

\newtheorem{example}[theorem]{Example}

\newtheorem{definition}[theorem]{Definition}
\newcommand{\Spec}{\operatorname{Spec}}
\newcommand{\hgt}{\operatorname{ht}}
\newcommand{\Sing}{\operatorname{Sing}}
\newcommand{\Ann}{\operatorname{Ann}}
\newcommand{\Frac}{\operatorname{Frac}}

\newcommand{\Rt}{\widetilde{R}}
\newcommand{\It}{\widetilde{I}}
\newcommand{\cc}{\mathfrak{c}}
\newcommand{\CC}{\mathfrak{C}}
\newcommand{\mm}{\mathfrak{m}}
\newcommand{\pp}{\mathfrak{p}}

\begin{document}

\title[Nori's question and the conductor]{On a question of Nori: the high-codimension conductor case}

\author{Jebasingh R}
\date{\today}
\address{Department of Mathematics, Indian Insitute of Technology Madras, Chennai, Tamil Nadu 600036}
\subjclass[2020]{13C10, 19A15, 13B22}
\keywords{}

\begin{abstract}
Let $R$ be an affine domain of dimension $d$ over an infinite field, with normalisation
$\Rt$ smooth, and let $\cc$ be the conductor. We answer a question of Nori
 in the affirmative when $R$ is singular only in high codimension.
\end{abstract}

\maketitle

\section{Introduction}

Let $A$ be a smooth affine domain of dimension $d$ over an infinite (perfect) field $k$, and let
$I\subseteq A[T]$ be an ideal of height $n$ with $\mu(I/I^2T)=n$, where $2n\ge d+3$. Suppose $I=(f_1,\dots,f_n)+I^2T$, Nori asked \cite[Appendix]{Man92} whether there exist
$F_i\in I$ with
\[
I=(F_1,\dots,F_n),\qquad F_i-f_i\in I^2T .
\]
 Mandal \cite{Man92} gave an affirmative answer, with no smoothness
hypothesis, whenever $I$ contains a monic polynomial; that case is excluded throughout what follows.
The local case was settled affirmatively by Mandal--Varma \cite{MV97}, by
Bhatwadekar--Sridharan \cite{BS98} for $n=d\ge 3$, and in general by Bhatwadekar--Keshari
\cite{BK03}.

Smoothness is not a technicality. Bhatwadekar--Sridharan record an example of dimension three normal $\mathbb{C}$-algebra of with an isolated singularity, and an ideal $I\subseteq R[T]$ of height three, for which
the lifting fails. It is therefore natural to ask what smoothness is actually being used for,
and to look for hypothesis on a singular $R$ under which the answer remains affirmative.

There are two ways to do this. One may constrain the ideal, keeping it away from the
singularities: this is the route of Banerjee--Das \cite[Theorem 4.5]{BD}, who require
that $I\cap R$ be contained only in smooth maximal ideals of $R$, so that localising at
$1+(I\cap R)$ produces a regular ring to which \cite{BK03} applies. Or one may constrain
the ring which we do here. Let
$\widetilde{R}$ be the normalisation of $R$, finite over $R$, and let $\mathfrak{c}$ be
the conductor of $R\subset\widetilde{R}$. When $\widetilde{R}$ is smooth, the conductor
cuts out the singular locus exactly, $V(\mathfrak{c})=\operatorname{Sing}(R)$. Our main result is that Nori's question has an affirmative answer when $R$ is singular only in high codimension.

\begin{theorem}\label{thm:main-intro}
Let $R$ be an affine domain of dimension $d$ over an infinite perfect field $k$ whose
normalisation $\Rt$ is smooth over $k$; let $\cc$ be the conductor ideal of $R\subset \Rt$. Let $I\subseteq R[T]$ be an ideal
of height $n$ with $\mu(I/I^2T)=n$ and $2n\ge d+3$, and suppose $I=(f_1,\dots,f_n)+I^2T$ is
given. If $\hgt\cc\ge d-n+2$, then $I=(g_1,\ldots,g_n)$ with $g_i-f_i\in I^2T$.

\end{theorem}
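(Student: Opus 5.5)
Our plan is to reduce to the smooth case over $\Rt$ by a "general position" argument: first move $f_1,\dots,f_n$ so that $I$ is comaximal with the conductor in a suitable sense, then use the standard patching/Milnor-square principle for $R\subset\Rt$ with conductor $\cc$. Note that $R/\cc\hookrightarrow \Rt/\cc\Rt$ and $\cc=\cc\Rt$ is an ideal of both rings. We have $\dim R/\cc\le d-(d-n+2)=n-2$. Hence the heights of $I\cap R$ and $\cc$ add up to more than $d$ in the relevant range, and a generic choice should make $I$ and $\cc R[T]$ "independent".

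\textbf{Step 1 (moving the generators).} We use elementary transformations of $(f_1,\dots,f_n)$ modulo $I^2T$. These are prime-avoidance and Swan--Bertini type arguments over the infinite field $k$, as in the proofs of Mandal and Bhatwadekar--Keshari. With them we replace $I$ by an ideal $I''$, or replace the $f_i$, so that the following holds: after adding an element of $I^2T$, one finds $J=(f_1,\dots,f_n)$ with $J=I\cap I'$, where $I'$ is a residual ideal of height $\ge n$ with $I'+\cc R[T]$ and $I'+I$ both of height large. The key numerical input is $\dim R[T]/\cc R[T]\le n-1$, together with the constraint that a height $n$ ideal in $R[T]$ meets it. Since $\hgt(I+\cc R[T])$ cannot exceed $n+1$ in general, we cannot expect comaximality. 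Instead we arrange that $I\,R_{\cc}$-type localisations and $I/\cc$ are controlled: modulo $\cc$, the ideal $I(R/\cc)[T]$ has height $\ge$ its number of generators minus $2$, so over $R/\cc[T]$ (of dimension $\le n-1$) the lifting problem is trivial. Indeed $n\ge \dim(R/\cc[T])+1$, and Eisenbud--Evans/Bhatwadekar--Keshari style estimates make any surjection $(R/\cc)[T]^n\to I/\cc I$ liftable.

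\textbf{Step 2 (solve on $\Rt$ and on $R/\cc$, then patch).} Over $\Rt[T]$, which is smooth with $\dim\Rt=d$, we apply Bhatwadekar--Keshari \cite{BK03} to $I\Rt[T]$. First we check that this ideal still has height $n$ and that $I\Rt[T]=(f_i)+I^2T\Rt[T]$; if it contains a monic polynomial, we use Mandal \cite{Man92} instead. This gives $G_i\in I\Rt[T]$ generating it with $G_i-f_i\in I^2T\Rt[T]$. Over $(R/\cc)[T]$, Step 1's dimension count gives generators $h_i$ lifting $f_i$. The goal is then to glue $(G_i)$ and $(h_i)$ through the Milnor square $R[T]\to \Rt[T]\times (R/\cc)[T]\rightrightarrows (\Rt/\cc)[T]$. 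Their images in $(\Rt/\cc)[T]$ differ by an automorphism of $(\Rt/\cc)[T]^n$ that fixes the generating set modulo $I^2T$. Gluing requires this automorphism to lie in $E_n$, or to factor as a product of automorphisms over the two sides.

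\textbf{Main obstacle.} The hard step is controlling this transition matrix. Our approach is to choose the $G_i$ themselves to agree with the $h_i$ modulo $\cc$. For this we use the relative form of \cite{BK03}: the Euler-class/ideal-lifting argument relative to the ideal $\cc\Rt[T]$, where the moving lemma allows generators to be changed only by elements congruent to the given ones modulo $\cc$. This uses $\dim \Rt/\cc\le n-2$, so $2n\ge d+3$ leaves room for the subtraction principle relative to $\cc$. Failing that, we would use $\dim(\Rt/\cc)[T]\le n-1<n$. In that range $SL_n=E_n$ stably on unimodular rows, by Suslin and Vaserstein for $n\ge \dim+2$, or at least $n\ge\dim+1$ after a further reduction. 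We would then lift the elementary matrix to $\Rt[T]$ and absorb it into the $G_i$, keeping $G_i-f_i\in I^2T$ because the matrix is congruent to the identity modulo the relevant ideal. The $g_i$ in $R[T]$ produced by patching then satisfy the statement.
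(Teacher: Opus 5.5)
Your Step 2 has a genuine gap: ideals do not patch across the conductor square the way projective modules do. Data on $\Rt[T]$ and on $(R/\cc)[T]$ only see $I\Rt[T]$ and $(I+\CC)/\CC$, where $\CC=\cc[T]$, and these do not determine $I$. The ideal $I+(I\Rt[T]\cap\CC)$ has the same two images, and it is in general strictly larger than $I$ once $V(I)$ meets $V(\CC)$. For instance, take $R$ as in Example~\ref{ex:hyperbola} and $I=(x_2,\dots,x_5)$. Then $x_1x_2$ lies in $I\Rt\cap\cc$ but not in $I$: any expression $x_1x_2=\sum_{i\ge 2}x_ir_i$ with $r_i\in R$ forces $r_2(p)=0$ and $r_2(q)=1$. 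So gluing $(G_i)$ with $(h_i)$ would not produce generators of $I$ even if the transition problem were solved.

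That transition problem is itself left open, for four reasons:
\begin{itemize}
\item Two $n$-element generating sets of a non-free ideal need not differ by an automorphism of the free module.
\item $\dim(\Rt/\cc)[T]\le n-1$ only gives $n\ge\dim+1$, which is outside the range $n\ge\dim+2$ of the stability results you quote.
\item Absorbing a lifted elementary matrix keeps $G_i-f_i\in I^2T$ only if that matrix is congruent to the identity modulo $I^2T$, and nothing arranges this.
\item The ``relative form'' of \cite{BK03} you appeal to is not an available result.
\end{itemize}
Note also that the residual ideal produced in your Step 1 is never used in Step 2.

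The missing idea is that the comaximality you rule out for $I$ is available for the \emph{residual} ideal. This is exactly where $\hgt\cc\ge d-n+2$ enters: it gives $\dim R[T]/\CC\le n-1$. So Lemma~\ref{lem:moving-a'}, applied with $\mathfrak a=\CC$ and $C=(T)$, produces $I\cap I_0=(b_1,\dots,b_n)$ with $b_i-f_i\in I^2T$, where $I_0$ has height $\ge n$ and is comaximal with $I$, with $T$ and with $\CC$.

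Such an $I_0$ never meets the singular locus. Choose $l\in I_0$ with $1-l\in\CC$; then $R[T]\subset\Rt[T]$ is an analytic isomorphism along $l$. By Proposition~\ref{prop:nashier}, $R[T]/I_0\cong\Rt[T]/\It_0$ and $I_0/I_0^2\cong\It_0/\It_0^2$.

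The paper then proceeds in four steps:
\begin{enumerate}
\item It uses $I_0(0)=R$ to get $I_0=(b_i')+I_0^2T$.
\item It solves the problem for $\It_0=I_0\Rt[T]$ in the smooth ring $\Rt[T]$ by \cite[Theorem 4.13]{BK03}, and transfers the generators back to $I_0$.
\item It recovers $I$ from $I\cap I_0$ and $I_0$ by the subtraction principle over $R(T)$, which has dimension $d$.
\item It returns from $R(T)$ to $R[T]$ with \cite[Proposition 7.4]{KT23}.
\end{enumerate}
In this route nothing is ever glued across the conductor, because the only ideal carried between $R[T]$ and $\Rt[T]$ is comaximal with $\CC$.
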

As a consequence of \ref{thm:main-intro} we show the $\mathbb{A}^1$-invariance of the Euler class groups for singular affine varieties (\ref{thm:euler}).
We emphasise that nothing is asked of the
ideal. In particular $V(I\cap R)$ may pass through the singularities of $R$ (Example~\ref{ex:hyperbola}). Theorem~\ref{thm:main-intro} does not reduce to a condition on $I\cap R$, and in particular is
not obtained by applying \cite[Theorem 4.5]{BD} to the ideals arising in its proof. The two
hypotheses are in fact independent: for $R=k[t^2,t^3][y_1,\dots,y_{d-1}]$ the conductor has
height one, so ours fails while that of \cite{BD} holds for any $I$ with $V(I\cap R)$
contained in $\Spec(R[1/t])$.

The condition $\hgt\cc\ge 2$
forces $R$ to satisfy Serre's $R_1$ while failing $S_2$; in particular no
Cohen--Macaulay ring satisfies the hypothesis (\ref{cor:notCM}). The examples are the pinchings: $R=\{f\in\mathcal{O}(X)\mid
f|_{Z_0}=f|_{Z_1}\circ\varphi\}$ for disjoint smooth closed subvarieties $\varphi: Z_0\simeq Z_1$ of a
smooth affine $X$, with $\dim Z_i\le n-2$.

\subsection*{Conventions}
All rings are commutative Noetherian and all modules finitely generated. For a module $M$,
$\mu(M)$ denotes the minimal number of generators. Throughout, $k$ is an infinite field, $R$
is an affine domain of dimension $d$ over $k$, $\Rt$ is the integral closure of $R$ in $\Frac(R)$.  ``Smooth'' means smooth over $k$.

\section{Preliminaries}

  \begin{lemma}\label{lem:height}
    Let $R\subset S$ be finite extension of affine domains over a field $k$. Let $I$ be an ideal of $R$. Then $\hgt(I)=\hgt(IS)$. 
\end{lemma}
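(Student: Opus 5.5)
We prove the equality $\hgt(I)=\hgt(IS)$ by establishing the two inequalities separately. The plan uses three standard facts about a finite extension $R\subset S$ of domains: lying over, going up, and incomparability. We also use that both rings are affine domains over $k$, so $\hgt\pp=\dim R-\dim R/\pp$ for every prime $\pp$, and similarly in $S$. We may assume $I\neq R$; then $IS\neq S$ by lying over, and both sides are finite.

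For $\hgt(IS)\le\hgt(I)$, choose a minimal prime $\pp$ of $I$ with $\hgt\pp=\hgt I$. By lying over there is a prime $\mathfrak q$ of $S$ with $\mathfrak q\cap R=\pp$; we may shrink $\mathfrak q$ to a prime minimal over $\pp S$, since the contraction of any prime between $\pp S$ and $\mathfrak q$ is still $\pp$. Now $S/\mathfrak q$ is integral over $R/\pp$, so $\dim S/\mathfrak q=\dim R/\pp$. We also have $\dim S=\dim R$. The dimension formula for affine domains then gives $\hgt\mathfrak q=\dim S-\dim S/\mathfrak q=\dim R-\dim R/\pp=\hgt\pp$. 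Since $\mathfrak q\supseteq IS$, we conclude $\hgt(IS)\le\hgt\mathfrak q=\hgt I$.

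For $\hgt(I)\le\hgt(IS)$, let $\mathfrak q$ be any prime of $S$ containing $IS$. Its contraction $\pp=\mathfrak q\cap R$ contains $I$. As before, $S/\mathfrak q$ is integral over $R/\pp$, so the two have equal dimension, and the dimension formula gives $\hgt\mathfrak q=\hgt\pp\ge\hgt I$. Taking the minimum over all such $\mathfrak q$ yields $\hgt(IS)\ge\hgt I$.

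The only step needing care is the equality $\hgt\mathfrak q=\hgt(\mathfrak q\cap R)$. In general, for integral extensions one only has $\hgt\mathfrak q\le\hgt(\mathfrak q\cap R)$, and equality can fail without normality of $R$ (going down is not available here). It is rescued by the catenarity and equidimensionality of affine domains, i.e.\ the formula $\hgt\pp+\dim R/\pp=\dim R$, together with $\dim S=\dim R$ and $\dim S/\mathfrak q=\dim R/\pp$, both of which follow from integrality. So the argument rests on the affine hypothesis rather than on going down.
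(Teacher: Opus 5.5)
Your proof is correct and follows essentially the same route as the paper. Both arguments first establish $\hgt\mathfrak q=\hgt(\mathfrak q\cap R)$ from the dimension formula for affine domains, using $\dim S=\dim R$ and $\dim S/\mathfrak q=\dim R/(\mathfrak q\cap R)$, and then obtain $\hgt(IS)\le\hgt I$ via lying over and $\hgt I\le\hgt(IS)$ by contracting primes containing $IS$. Your extra step of shrinking $\mathfrak q$ to a prime minimal over $\pp S$ is harmless but not needed.
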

\begin{proof}
    We first show that for a prime ideal $\mathfrak{q}$ of $S$, $\hgt(\mathfrak{q}\cap R)=\hgt(\mathfrak{p})$. By incomparibility of integral extension one has $\hgt(\mathfrak{q}\cap R)\geq \hgt(\mathfrak{p})$. As $R,S$ are affine domains over $k$ one has $\dim(S/\mathfrak{q})+\hgt(\mathfrak{q})=\dim(S)$, as $R\subset S$ and $R/(\mathfrak{q}\cap R)\subset  S/\mathfrak{q}$ are integral extensions one has $\dim(S/\mathfrak{q})=\dim(R/(\mathfrak{q}\cap R)$ and $\dim(R)=\dim(S)$. Therefore $\hgt(\mathfrak{q})=ht(\mathfrak{q}\cap R)$.

    Let $\mathfrak{q}$ be minimal prime over $IS$. Then $I\subset \mathfrak{q}\cap R$. As $\hgt(\mathfrak{q}\cap R)=\hgt(\mathfrak{q})$. It follows $\hgt(I)\leq \hgt(IS)$. Let $\mathfrak{p}$ be prime ideal of $R$ containing $I$ such that $\hgt(I)=\hgt(\mathfrak{p})$. Let $\mathfrak{q}$ be prime ideal lying over $\mathfrak{p}$. Then $IS\subset (\mathfrak{q}\cap R)S\subset \mathfrak{q}$ and $\hgt(\mathfrak{q)}=\hgt(\mathfrak{p)}$. It follows that $\hgt(IS)\leq \hgt(I).$ 
   \end{proof}  
   \begin{lemma}\label{lem:square}
$B=\Rt[T]$ is the normalisation of $A=R[T]$, and $\Ann_A B/A=\cc[T]=\CC$. In particular
$\hgt\CC=\hgt\cc$.
\end{lemma}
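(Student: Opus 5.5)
The plan is to prove the three assertions in the order stated: normalisation, then conductor, then height.

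\emph{Normalisation.} We have $A=R[T]\subseteq \Rt[T]=B$, and $B$ is finite over $A$ because $\Rt$ is finite over $R$. Hence $B$ is integral over $A$. Since $\Frac(B)=\Frac(\Rt)(T)=\Frac(R)(T)=\Frac(A)$, the ring $B$ lies in the integral closure of $A$. Conversely, $\Rt$ is a normal Noetherian domain, and a polynomial ring over a normal domain is normal. So $B$ is integrally closed in its fraction field, and therefore $B$ is exactly the integral closure of $A$.

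\emph{Conductor.} Here we compare coefficients. Suppose $g=\sum_j c_jT^j\in A$ satisfies $gB\subseteq A$. Then $g\Rt\subseteq A$. For $b\in\Rt$, the coefficient of $T^j$ in $gb$ is $c_jb$, and it lies in $R$ because $gb\in R[T]$. Thus $c_j\Rt\subseteq R$ for every $j$, i.e.\ each $c_j\in\cc$, and so $g\in\cc[T]$. Conversely, if every $c_j\in\cc$, then for any $h=\sum_i b_iT^i\in B$ we get $gh=\sum_{i,j}c_jb_iT^{i+j}$, and each product $c_jb_i$ lies in $R$. Hence $gB\subseteq A$. This shows $\Ann_A(B/A)=\cc[T]$.

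\emph{Height.} It is standard that $\hgt(J[T])=\hgt(J)$ for an ideal $J$ of a Noetherian ring. The reason is that the minimal primes of $J[T]$ are exactly $\pp[T]$ for $\pp$ minimal over $J$, and $\hgt(\pp[T])=\hgt(\pp)$. Applying this with $J=\cc$ gives $\hgt\CC=\hgt\cc$.

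Nothing here is a real obstacle. The only point needing care is the coefficient comparison in the conductor step, and that works because the coefficient extraction map $R[T]\to R$ is $R$-linear and compatible with multiplication by the constants $\Rt\subseteq B$.
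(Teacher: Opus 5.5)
Your proposal is correct and follows the paper's proof: normality, integrality and equal fraction fields for the first claim, then a coefficientwise computation of the conductor, which is exactly the paper's identity $\Ann_{R[T]}(M[T])=\Ann_R(M)R[T]$ applied to $M=\Rt/R$. Your argument for $\hgt\CC=\hgt\cc$ via the minimal primes $\pp[T]$ supplies a step the paper leaves implicit.
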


\begin{proof}
$B$ is normal, integral over $A$, with $\Frac(B)=\Frac(A)$. As $A$-modules $B/A=(\Rt/R)[T]$,
and $\Ann_{R[T]}(M[T])=\Ann_R(M)R[T]$ for finitely generated $M$.
\end{proof}
\begin{lemma}\label{lem:loc}
For every $\pp\in\Spec R$ one has $\cc\not\subseteq\pp$ if and only if $R_\pp$ is normal.
\end{lemma}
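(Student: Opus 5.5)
The plan is to prove the lemma by localising the finite extension $R\subset\Rt$ at $\pp$ and using the fact that annihilators of finitely generated modules commute with localisation. Since $\Rt$ is a finitely generated $R$-module ($R$ is an affine domain), so is $\Rt/R$. By definition $\cc=\Ann_R(\Rt/R)$, and therefore
\[
\cc R_\pp=\Ann_{R_\pp}\bigl((\Rt/R)_\pp\bigr)=\Ann_{R_\pp}\bigl(\Rt_\pp/R_\pp\bigr).
\]
Because annihilators of finitely generated modules localise, $\cc\not\subseteq\pp$ holds if and only if $\cc R_\pp=R_\pp$. That in turn holds if and only if $(\Rt/R)_\pp=0$, i.e. $R_\pp=\Rt_\pp$.

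Next, identify $\Rt_\pp=\Rt\otimes_R R_\pp$ with the normalisation of $R_\pp$. Integral closure commutes with localisation, and $\Frac(R_\pp)=\Frac(R)$, so $\Rt_\pp$ is exactly the integral closure of $R_\pp$ in its fraction field.

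Both directions now follow.
\begin{itemize}
\item[$(\Rightarrow)$] If $\cc\not\subseteq\pp$, then $R_\pp=\Rt_\pp$, which is integrally closed, so $R_\pp$ is normal.
\item[$(\Leftarrow)$] If $R_\pp$ is normal, then $R_\pp$ equals its own integral closure $\Rt_\pp$. Hence $(\Rt/R)_\pp=0$, so $\cc R_\pp=R_\pp$, and thus $\cc\not\subseteq\pp$.
\end{itemize}

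The only step needing care is the equality $\Ann(M)_\pp=\Ann(M_\pp)$. It uses finite generation of $M=\Rt/R$. Concretely: if $x_1,\dots,x_r$ generate $M$ and $s_ix_i=0$ in $M$ with $s_i\notin\pp$, then $s=\prod s_i\notin\pp$ lies in $\Ann(M)$. That finite generation is where the finiteness of normalisation for affine domains enters; the rest is formal.
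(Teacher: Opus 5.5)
Your proposal is correct and follows the same route as the paper. Both arguments use that $\Rt/R$ is finitely generated, so that $\cc R_\pp=\Ann_{R_\pp}\bigl((\Rt/R)_\pp\bigr)$, together with the fact that integral closure commutes with localisation; this gives $\cc\not\subseteq\pp$ iff $(\Rt/R)_\pp=0$ iff $R_\pp$ is normal. Your explicit element $s=\prod s_i$ just spells out the annihilator-localisation step that the paper states without detail.
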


\begin{proof}
Integral closure commutes with localisation, so $(\Rt)_\pp=\widetilde{R_\pp}$. Since $\Rt/R$
is a finitely generated $R$-module, $\Ann_R(\Rt/R)_\pp=\Ann_{R_\pp}\big((\Rt/R)_\pp\big)$.
Hence $\cc\not\subseteq\pp$ iff $(\Rt/R)_\pp=0$ iff $R_\pp=\widetilde{R_\pp}$.
\end{proof}
\begin{proposition}\label{prop:dictionary}
Assume $\Rt$ is smooth over $k$. Then for every $\pp\in\Spec R$ the following are equivalent:
$\cc\not\subseteq\pp$; $R_\pp$ is normal; $R_\pp$ is regular; $R_\pp$ is smooth over $k$.
Consequently $V(\cc)=\Sing(R)$.
\end{proposition}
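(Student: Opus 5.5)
The plan is to prove the cycle of implications
\[
\cc\not\subseteq\pp \;\Longleftrightarrow\; R_\pp \text{ normal}\;\Longleftarrow\; R_\pp \text{ regular}\;\Longleftarrow\; R_\pp \text{ smooth}\;\Longleftarrow\; R_\pp \text{ normal}.
\]
The first equivalence is exactly Lemma~\ref{lem:loc}, so nothing new is needed there. The implication ``regular $\Rightarrow$ normal'' is standard: a regular local ring is a UFD, hence integrally closed. For ``smooth $\Rightarrow$ regular'', recall that a local ring essentially of finite type and smooth over $k$ is regular. Here the precise meaning of ``$R_\pp$ smooth over $k$'' should be that $R$ is smooth at $\pp$ in the sense of the Jacobian criterion, i.e.\ $\Omega_{R/k}$ is free of rank $d$ near $\pp$ and $\pp$ lies in the smooth locus.

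The only substantive implication is ``normal $\Rightarrow$ smooth''. Suppose $R_\pp$ is normal. Integral closure commutes with localisation, as already used in Lemma~\ref{lem:loc}, so
\[
(\Rt)_\pp=\widetilde{R_\pp}=R_\pp .
\]
Since $\cc\not\subseteq\pp$, we can choose $s\in\cc\setminus\pp$. Then $s\Rt\subseteq R$, and this gives $R_s=\Rt_s$. Because $\Rt$ is smooth over $k$, its localisation $\Rt_s$ is smooth over $k$. Hence $R_s$ is smooth over $k$, and so is its further localisation $R_\pp$.

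Working with an honest open set $D(s)$ in this way, rather than only with the local ring, avoids any subtlety about smoothness of local rings that are not of finite type. This is also the step I expect to need the most care. One must take the definition of ``smooth at $\pp$'' that localises properly: via $\Omega$, or as an open condition. Perfectness of $k$ is available if we instead want to pass through ``regular $\Leftrightarrow$ smooth'', but the direct route above does not need it.

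Finally, for $V(\cc)=\Sing(R)$: by definition $\Sing(R)$ is the set of $\pp$ with $R_\pp$ not regular. By the equivalences above this is exactly the set of $\pp$ with $\cc\subseteq\pp$, that is, $V(\cc)$.
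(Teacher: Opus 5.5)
Your proposal is correct and follows essentially the same route as the paper. The paper also uses Lemma~\ref{lem:loc} for the first equivalence and notes that $\cc\not\subseteq\pp$ makes $R_\pp=(\Rt)_\pp$ a localisation of the smooth ring $\Rt$, hence smooth and so regular; it then closes the cycle with ``regular local $\Rightarrow$ UFD $\Rightarrow$ normal''. Your use of the open set $R_s=\Rt_s$ for $s\in\cc\setminus\pp$ is just a slightly more careful way of saying ``essentially smooth''.
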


\begin{proof}
The first two are Lemma~\ref{lem:loc}. If $\cc\not\subseteq\pp$ then $R_\pp=(\Rt)_\pp$ is a
localisation of a smooth $k$-algebra, hence essentially smooth, hence regular. Finally a
regular local ring is a unique factorisation domain and therefore integrally closed.
\end{proof}
\begin{corollary}\label{cor:BDdict}
Assume $\Rt$ is smooth and let $J\subseteq R$ be a proper ideal. Then $J$ is contained only in
smooth maximal ideals of $R$ if and only if $\cc+J=R$, if and only if $R_{1+J}$ is essentially
smooth over $k$. 
\end{corollary}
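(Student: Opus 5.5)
The plan is to reduce everything to Proposition~\ref{prop:dictionary} together with the standard behaviour of maximal ideals under localisation at a multiplicative set $1+J$. Throughout, $J\subseteq R$ is a proper ideal, and ``smooth maximal ideal'' means a maximal ideal $\mm$ with $R_\mm$ smooth over $k$.

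\emph{First equivalence.} We must show that every maximal ideal $\mm\supseteq J$ is smooth if and only if $\cc+J=R$.
\begin{itemize}
\item Suppose every such $\mm$ is smooth. By Proposition~\ref{prop:dictionary}, $\cc\not\subseteq\mm$ for each $\mm\supseteq J$. If $\cc+J\neq R$, then $\cc+J$ would lie in some maximal ideal $\mm$. That $\mm$ contains $J$, hence is smooth, yet contains $\cc$; this is a contradiction.
\item Conversely, suppose $\cc+J=R$ and $\mm\supseteq J$ is maximal. Then $\cc\not\subseteq\mm$, and Proposition~\ref{prop:dictionary} shows $R_\mm$ is smooth.
\end{itemize}

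\emph{Second equivalence.} We must show that $\cc+J=R$ if and only if $R_{1+J}$ is essentially smooth. The key fact is that the maximal ideals of $R_{1+J}$ are exactly the $\mm R_{1+J}$ with $\mm$ maximal in $R$ and $\mm\supseteq J$. Indeed, if $\mm\not\supseteq J$, then $\mm+J=R$, so $\mm$ meets $1+J$. If $\mm\supseteq J$, then $\mm\cap(1+J)=\emptyset$. Finally, every prime of $R$ disjoint from $1+J$ lies in a maximal ideal disjoint from $1+J$: otherwise $\pp+J=R$, which would force $1+J$ to meet $\pp$.

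Essential smoothness of $R_{1+J}$ can be checked at its maximal ideals, since localisations of essentially smooth local rings remain essentially smooth, or equivalently regular for the purposes of Proposition~\ref{prop:dictionary}. The local rings at these maximal ideals are the $R_\mm$ with $\mm\supseteq J$. So $R_{1+J}$ is essentially smooth exactly when all maximal ideals containing $J$ are smooth, which is the first condition.

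\emph{Main obstacle.} The only point needing care is what ``essentially smooth'' means for the semilocal-type ring $R_{1+J}$. I would take it to mean that every local ring at a prime is a localisation of a smooth $k$-algebra, or equivalently, since $k$ is perfect, that every such local ring is regular. Proposition~\ref{prop:dictionary} then applies prime by prime. The reduction from arbitrary primes to maximal ideals is the containment argument above together with the fact that localisations of regular local rings are regular.
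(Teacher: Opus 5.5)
Your first equivalence is the paper's argument: take a maximal ideal containing $\cc+J$ and apply Proposition~\ref{prop:dictionary}. Your identification of the maximal ideals of $R_{1+J}$ with the maximal ideals of $R$ containing $J$ matches the paper's description $\Spec R_{1+J}=\{\pp:\pp+J\ne R\}$.

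The one place you diverge is the last equivalence. There your argument is complete only under the prime-by-prime reading of ``essentially smooth'' that you choose to adopt. The paper means that $R_{1+J}$ is itself a localisation of a smooth $k$-algebra; its last sentence proves exactly this. Under that reading, the implication $\cc+J=R\Rightarrow R_{1+J}$ essentially smooth needs one more step, which your local analysis does not supply:
\begin{itemize}
\item Write $1=c+j$ with $c\in\cc$ and $j\in J$. Then $c=1-j\in\cc\cap(1+J)$, so $c$ is already a unit in $R_{1+J}$ and $R_{1+J}=(R_c)_{1+J}$.
\item Since $c\Rt\subseteq R$, one has $R_c=\Rt_c$, which is a smooth $k$-algebra.
\end{itemize}
This is the paper's identity $R_{1+J}=\Rt_{1+J\Rt}$. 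Passing from ``all local rings smooth'' to ``localisation of a smooth algebra'' is true for rings essentially of finite type, via openness of the smooth locus. But it is not a tautology, so it should not be built in as a definition; here it is supplied precisely by the element $c$. The converse implication is fine under either reading, since each $R_\mm$ with $\mm\supseteq J$ is a localisation of $R_{1+J}$.

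A minor point: perfectness of $k$ is not a hypothesis of this corollary, since the conventions only assume $k$ infinite. You also do not need it. Once $\Rt$ is smooth, Proposition~\ref{prop:dictionary} already gives regular $\iff$ smooth for every local ring of $R$.
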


\begin{proof}
If $\cc+J\ne R$ choose $\mm\supseteq\cc+J$ maximal; then $R_\mm$ is not smooth by
Proposition~\ref{prop:dictionary}. Conversely,  $\Spec R_{1+J}=\{\pp:\pp+J\ne R\}$, and each
such $\pp$ lies under a maximal ideal containing $J$; if $\cc+J=R$ then $\cc\not\subseteq\pp$
for all of them, and $R_\pp$ is smooth. The last equivalence follows since
$\cc+J=R$ gives $\cc R_{1+J}=R_{1+J}$, whence $R_{1+J}=\Rt_{1+J\Rt}$ is a localisation of a
smooth ring.
\end{proof}

\begin{proposition}\label{prop:S2}
Let $R$ be a non-normal affine domain satisfying Serre's condition $S_2$. Then every minimal
prime of $\cc$ has height one; in particular $\hgt\cc=1$ and $\dim V(\cc)=d-1$.
\end{proposition}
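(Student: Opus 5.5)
The plan is to argue by contradiction with a prime of larger height, using the standard fact that conductor primes are associated to $\Rt/R$, together with the depth bound coming from $S_2$. First, since $R$ is non-normal we have $\Rt\neq R$, so $\cc\neq R$. Next, $\Rt/R$ is a finitely generated $R$-module with $\Ann_R(\Rt/R)=\cc$, so $V(\cc)=\operatorname{Supp}(\Rt/R)$. Consequently every minimal prime $\pp$ of $\cc$ is a minimal element of the support, hence an associated prime of $\Rt/R$. Localising at $\pp$ then gives $\pp R_\pp\in\operatorname{Ass}_{R_\pp}(\Rt_\pp/R_\pp)$, and in particular $\operatorname{depth}(\Rt_\pp/R_\pp)=0$ as an $R_\pp$-module.

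Now suppose $\hgt\pp\ge 2$. Put $(S,\mathfrak n)=(R_\pp,\pp R_\pp)$ and $S'=\Rt_\pp$, and consider the exact sequence of $S$-modules
\[
0\to S\to S'\to S'/S\to 0 .
\]
By $S_2$ we have $\operatorname{depth} S\ge\min(2,\dim S)=2$. The module $S'$ is a finite torsion-free $S$-module, so it has depth at least $1$; pick a nonzerodivisor $x\in\mathfrak n$ on $S$, which is also a nonzerodivisor on $S'$ since $S'$ is contained in the fraction field. The depth lemma then gives
\[
\operatorname{depth}(S'/S)\ge\min(\operatorname{depth}S-1,\operatorname{depth}S')\ge 1,
\]
which contradicts $\operatorname{depth}(S'/S)=0$. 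Hence $\hgt\pp\le 1$. Since $R$ is a domain and $\cc\neq 0$ (the conductor of a finite birational extension is nonzero: clear the denominators of the generators of $\Rt$), we get $\hgt\pp\ge 1$, so $\hgt\pp=1$.

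The step I expect to be the main obstacle is this depth-lemma step. An alternative route avoiding depth is to use the $S_2$ characterisation $R=\bigcap_{\hgt\mathfrak q=1}R_{\mathfrak q}$, which holds for domains satisfying $S_2$. If all minimal primes of $\cc$ had height $\ge2$, then $\cc\not\subseteq\mathfrak q$ for every height-one $\mathfrak q$. Lemma~\ref{lem:loc} would then make every such $R_{\mathfrak q}$ normal, so $\Rt\subseteq\bigcap_{\hgt\mathfrak q=1}R_{\mathfrak q}=R$, contradicting non-normality. This argument yields $\hgt\cc=1$ directly, and applied locally at each minimal prime $\pp$ of $\cc$ (replacing $R$ by $R_\pp$, which is non-normal and $S_2$) it yields that every minimal prime has height one. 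I would present this cleaner version.

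For the final claim, $\cc$ is a nonzero proper ideal, so $\hgt\cc=1$. For each minimal prime $\pp$ of $\cc$, the dimension formula for affine domains gives $\dim R/\pp=d-1$. Hence $\dim V(\cc)=d-1$.
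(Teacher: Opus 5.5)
Your proof is correct, and you actually give two valid arguments.

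\textbf{The intersection route (the one you would present).} This is essentially the paper's proof with Serre's criterion unpacked. The paper localises at a minimal prime $\pp$ of $\cc$ with $\hgt\pp\ge 2$. It uses Lemma~\ref{lem:loc} to see that $R_\pp$ is $R_1$, and then cites Serre's criterion ($R_1+S_2\Rightarrow$ normal) to contradict $\cc\subseteq\pp$. You use only the $S_2$ half of that criterion,
$R_\pp=\bigcap_{\mathfrak q\subseteq\pp,\ \hgt\mathfrak q=1}R_{\mathfrak q}$,
together with $\Rt\subseteq R_{\mathfrak q}$ for height-one $\mathfrak q$ avoiding $\cc$. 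If you run this directly at $\pp$, your preliminary global step becomes redundant. If you keep the ``replace $R$ by $R_\pp$'' formulation, say explicitly that the conductor of $R_\pp\subseteq\Rt_\pp$ is $\cc R_\pp$, and that the only prime containing it is $\pp R_\pp$.

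\textbf{The depth-lemma route.} This one is genuinely different. It never uses normality of $\Rt$ (Lemma~\ref{lem:loc}). It needs only $V(\cc)=\operatorname{Supp}(\Rt/R)$ and the fact that $\Rt$ is a finite torsion-free birational extension. So it applies verbatim to $\Ann_R(S/R)$ for any finite birational extension $R\subsetneq S$ of an $S_2$ domain. It also proves more: \emph{every} associated prime of $\Rt/R$, not just every minimal prime of $\cc$, has height one, so $\Rt/R$ has no embedded primes.

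\textbf{Comparison.} The paper's route is shorter once Serre's criterion is taken as a black box. Your depth route trades that citation for a standard depth count and gains generality.
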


\begin{proof}
Let $\pp$ be minimal over $\cc$ and suppose $\hgt\pp\ge 2$. Every prime $\mathfrak
q\subsetneq\pp$ satisfies $\cc\not\subseteq\mathfrak q$, so $R_{\mathfrak q}$ is normal by
Lemma~\ref{lem:loc}; for $\hgt\mathfrak q=1$ this makes $R_{\mathfrak q}$ a discrete valuation
ring. Thus $R_\pp$ satisfies $R_1$, and it satisfies $S_2$; by Serre's criterion $R_\pp$ is
normal, contradicting $\cc\subseteq\pp$. Since $\cc\ne 0$ and $R$ is a domain, $\hgt\pp=1$.
\end{proof}

\begin{corollary}\label{cor:notCM}
Under the hypotheses of Theorem~\ref{thm:main-intro} with $n\le d$, the ring $R$ satisfies
$R_1$ and fails $S_2$; in particular $R$ is not Cohen--Macaulay.
\end{corollary}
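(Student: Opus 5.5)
The argument will combine the arithmetic of the hypotheses with Proposition~\ref{prop:dictionary} for the $R_1$ part, and with Proposition~\ref{prop:S2} for the failure of $S_2$.

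First, the hypotheses give a lower bound on the conductor. From $2n\ge d+3$ we get $n\ge (d+3)/2$, hence
\[
d-n+2\le d-\tfrac{d+3}{2}+2=\tfrac{d+1}{2}.
\]
More directly, $n\le d$ implies $d-n+2\ge 2$. Thus $\hgt\cc\ge d-n+2\ge 2$. The restriction $n\le d$ is essential here: for $n>d$ the bound on $\hgt\cc$ could be vacuous.

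\emph{The $R_1$ property.} Let $\pp\in\Spec R$ with $\hgt\pp\le 1$. If $\pp=0$, then $R_\pp$ is a field. Otherwise $\hgt\pp=1<\hgt\cc$, so $\cc\not\subseteq\pp$. By Proposition~\ref{prop:dictionary}, $R_\pp$ is then regular. Hence $R$ satisfies $R_1$.

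\emph{Failure of $S_2$.} We must first rule out that $R$ is normal. If $R$ were normal, then $\cc=R$, and by the usual convention $\hgt R=\infty$, so the hypothesis holds vacuously. The corollary is therefore implicitly about non-normal $R$, i.e.\ $R\ne\Rt$. I will state that the singular (non-normal) case is meant, since otherwise the statement is false: take $R$ smooth. With $R$ non-normal, suppose $R$ satisfied $S_2$. Proposition~\ref{prop:S2} would then force $\hgt\cc=1$, contradicting $\hgt\cc\ge 2$. Hence $R$ fails $S_2$.

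Finally, a Cohen--Macaulay ring satisfies $S_k$ for all $k$, in particular $S_2$, so $R$ is not Cohen--Macaulay.

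The only delicate point is the normal case. I would handle it by noting that the corollary concerns $R$ with $\cc\ne R$, which is the situation of interest (singular $R$).
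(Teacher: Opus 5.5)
Your proof is correct and follows the paper's argument. You use $n\le d$ to get $\hgt\cc\ge 2$, so height-one primes avoid $\cc$ and Proposition~\ref{prop:dictionary} gives $R_1$, and Proposition~\ref{prop:S2} then rules out $S_2$. Your point that non-normality must be assumed (the statement fails for smooth $R$, where $\cc=R$) is the same tacit exclusion as the paper's parenthetical ``else $\cc=R$ and $R$ is smooth'', which also sets the smooth case aside rather than deriving a contradiction.
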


\begin{proof}
$\hgt\cc\ge d-n+2\ge 2$, so $R_\pp$ is normal, hence regular, for every $\pp$ of height one:
this is $R_1$. If $R$ were $S_2$ then $\hgt\cc=1$ by Proposition~\ref{prop:S2} (note $R$ is
not normal, else $\cc=R$ and $R$ is smooth), a contradiction.
\end{proof}
The following is a lemma of Mohan Kumar \cite[Lemma 3.2]{BK03}.
\begin{lemma}\label{lem:mohan}
Let $R$ be a ring and $J\subset R$ be an ideal of $R$. Let $K\subset J$
and $L\subset J^2$ be two ideals of $R$ such that $K+L=J$. Then $J=K+(e)$
for some $e\in L$ with $e(1-e)\in K$
and $K=J\cap J'$, where $J'+L=R$.
\end{lemma}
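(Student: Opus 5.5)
This is Mohan Kumar's lemma as quoted from \cite[Lemma 3.2]{BK03}, and I would prove it directly, using Nakayama's lemma in its determinant-trick form. The idea is to push the relation $K+L=J$ into the quotient $\bar R=R/K$. Write $\bar J=J/K$. Since $K+L=J$ and $L\subseteq J^2$, we get $\bar J=\bar L\subseteq \bar J^2$, so $\bar J=\bar J^2$. Moreover $\bar J$ is finitely generated, because $R$ is Noetherian under the paper's conventions.

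\textbf{Finding the idempotent.} By the standard Nakayama argument (the Cayley--Hamilton/determinant trick applied to $\bar J\cdot\bar J=\bar J$), a finitely generated ideal equal to its square is generated by an idempotent. The refinement needed here is that this idempotent can be taken as the image of an element of $L$, not merely of $J$.

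To get this, I would choose generators of $J$ modulo $K$ that lie in $L$. This is possible since $J=K+L$. Write them as $\ell_1,\dots,\ell_r\in L$. The equation $\bar J=\bar J\bar L$ holds: indeed $\bar J\bar L\subseteq \bar J$, and $\bar J=\bar L\subseteq \bar J^2=\bar J\bar L$. Hence for each $i$ we can write $\ell_i\equiv\sum_j a_{ij}\ell_j \pmod K$ with $a_{ij}\in L$.

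Set $M=I-(a_{ij})$. Then $\det(M)$ annihilates $\bar J$, and $\det(M)=1-e$ with $e\in L$, since every term of the expansion besides $1$ contains a factor from $L$. It follows that $(1-e)J\subseteq K$. In particular $e(1-e)\in K$, because $e\in L\subseteq J$. We also get $J=K+(e)$: for any $x\in J$, $x=xe+x(1-e)$, where $xe\in(e)$ and $x(1-e)\in K$.

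\textbf{Constructing $J'$.} Set $J'=K+(1-e)$. Then $J'+L\ni(1-e)+e=1$, so $J'+L=R$.

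It remains to check $K=J\cap J'$. The inclusion $K\subseteq J\cap J'$ is clear. Conversely, let $x=k+r(1-e)\in J$ with $k\in K$. Then $r(1-e)\in J$. Write $r(1-e)=re-re^2+r(1-e)^2$; more simply, $r(1-e)=r(1-e)e+r(1-e)^2$. The first term lies in $(1-e)J\subseteq K$, since $r(1-e)\in J$. The second is $r(1-e)^2=r(1-e)-re(1-e)$, which looks circular, so I would argue differently: $r(1-e)\in J$ gives $r(1-e)\cdot(1-e)\in K$ and $r(1-e)\cdot e\in K$ (the latter because $e(1-e)\in K$). Summing, $r(1-e)\in K$. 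Hence $x\in K$.

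\textbf{Main obstacle.} The only delicate point is arranging that the determinant has the form $1-e$ with $e\in L$ rather than merely in $J$. This is exactly why the generators are taken from $L$ and the coefficients $a_{ij}$ are also taken from $L$, which the inclusion $\bar J\subseteq\bar J\bar L$ makes possible.
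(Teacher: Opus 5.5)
The paper gives no proof of this lemma; it quotes it from \cite[Lemma 3.2]{BK03}. Your argument is correct and is essentially the standard proof of Mohan Kumar's lemma:
\begin{enumerate}
\item pass to $R/K$, where $J/K$ is finitely generated and equal to its square;
\item use the determinant trick to produce $e$ with $(1-e)J\subseteq K$;
\item set $J'=K+(1-e)$.
\end{enumerate}

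The only difference is how you ensure $e\in L$. The usual argument first takes any $e\in J$ with $(1-e)J\subseteq K$. It then uses $J=K+L$ to replace $e$ by some $\ell\in L$ with $e-\ell\in K$. Nothing is lost, because the conditions $J=K+(e)$, $e(1-e)\in K$ and $(1-e)J\subseteq K$ depend only on the class of $e$ modulo $K$. So the step you call the main obstacle is automatic. Your choice of both the generators $\ell_i$ and the coefficients $a_{ij}$ in $L$ handles it equally well.

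Two small remarks. First, the finite generation you take from the Noetherian convention is genuinely needed. For a valuation ring with value group $\mathbb{Q}$, with $J=L=\mathfrak{m}$ and $K=0$, the hypotheses hold but no idempotent $e$ generates $\mathfrak{m}$. Second, your first attempt at proving $K=J\cap J'$ was not actually circular. Once $r(1-e)\in J$, you have $r(1-e)^2=(1-e)\cdot r(1-e)\in(1-e)J\subseteq K$, and that attempt would have finished directly.
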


\begin{lemma}[Moving lemma]\label{lem:moving-a'}
Let $R$ be a ring and let $P$ be a projective $R$-module of rank $n$. Let
$C,J\subset R$ be two ideals and suppose that
$\alpha\colon P/JP\twoheadrightarrow J/(J^{2}C)$ is a surjection. Let
$\mathfrak{a}\subset R$ be an ideal with $\dim(R/\mathfrak{a})\le n-1$. Then
there exist an ideal $J'\subset R$ and a surjection
$\beta\colon P\twoheadrightarrow J\cap J'$ such that:
\begin{enumerate}
  \item $(J\cap C\mathfrak{a})+J'=R$; in particular $J'+J=R$, $J'+C=R$ and
        $J'+\mathfrak{a}=R$;
  \item $\beta\otimes R/(J^{2}C)=\alpha$;
  \item $\operatorname{ht}(J')\ge n$.
\end{enumerate}
\end{lemma}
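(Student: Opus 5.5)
This is the standard moving lemma (as in Bhatwadekar--Sridharan / Bhatwadekar--Keshari), and I would prove it in three steps: lift $\alpha$, add a general element of $J^2C\,P^*$, and then use Mohan Kumar's Lemma~\ref{lem:mohan} to split off the residual ideal.

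\emph{Step 1: lifting.} Lift $\alpha$ to a map $\theta\colon P\to J$; this is possible because $P$ is projective. Then
\[
\theta(P)+J^2C=J .
\]

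\emph{Step 2: general position.} For $\gamma\in (J^2C)P^*$, put $\beta=\theta+\gamma$ and $K=\beta(P)$. These $\beta$ still reduce to $\alpha$ modulo $J^2C$, which gives (2), and they satisfy $K+J^2C=J$. I would choose $\gamma$ by an Eisenbud--Evans type general position argument: applied to the pair $(\theta(P),\,(J^2C)P^*)$ on the ring $R$, it arranges that $\beta(P)_{\mathfrak b}$ has height $\ge n$ off $V(J^2C\mathfrak b)$ for a suitable auxiliary ideal $\mathfrak b$.

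\emph{Step 3: splitting off $J'$.} Apply Lemma~\ref{lem:mohan} with $L=J^2C\subset J^2$. This gives $K=J\cap J'$ with $J'+J^2C=R$, and in particular $J'+J=R$ and $J'+C=R$.

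\emph{Step 4: coprimality with $\mathfrak a$ and height.} We still need $J'+\mathfrak a=R$ and $\hgt J'\ge n$. The trick I would use is to pass to the ring $R_{\mathfrak a}:=R/\mathfrak a$ first.
\begin{itemize}
\item Over $R/\mathfrak a$, which has dimension $\le n-1$, the module $P/\mathfrak aP$ has rank $n>\dim$. By a Swan/Eisenbud--Evans basic-element argument, we can pick $\gamma$ so that $\beta(P)+\mathfrak a\supseteq J^2C+\mathfrak a$ and the residual is the unit ideal modulo $\mathfrak a$, i.e.\ $J'+\mathfrak a=R$. Concretely, we need the ideal $\beta(P)+\mathfrak a$ to contain an element $\equiv 1$ modulo $J^2C$, and rank $n$ exceeding $\dim R/\mathfrak a$ makes the corresponding unimodularity achievable.
\item Simultaneously, Eisenbud--Evans on all of $R$ (whose dimension may be larger) makes $\hgt(\beta(P)R_x)\ge n$ where $x$ is an element of $J^2C\mathfrak a$. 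Since $J'$ is comaximal with $J^2C\mathfrak a$, $J'$ agrees with $\beta(P)$ away from $V(J^2C\mathfrak a)$, so $\hgt J'\ge n$ (or $J'=R$).
\end{itemize}
Combining the comaximalities, $J'$ is comaximal with $J$, with $C$ and with $\mathfrak a$. Since $J\cap C\mathfrak a$ contains $J\cdot C\mathfrak a$, this gives $(J\cap C\mathfrak a)+J'=R$, which is (1). Item (3) is the height bound just obtained.

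The main obstacle is Step 4: making a single choice of $\gamma$ work for both the height condition and the comaximality with $\mathfrak a$. I would handle this with one application of the Eisenbud--Evans theorem to the ring $R$, keeping track of primes containing $\mathfrak a$ separately. These primes have $\dim R/\mathfrak p\le n-1<n$, so basic-element conditions there are achievable, while the primes outside $V(\mathfrak a)$ give the height bound.
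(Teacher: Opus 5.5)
\textbf{Verdict: genuine gap.} The lifting, the use of Lemma~\ref{lem:mohan} with $L=J^2C$, and the final assembly of (1) are fine, and you correctly locate the whole difficulty in Step~4. But Step~4 is not proved, and its height argument does not follow as written.

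\emph{The height step.} Eisenbud--Evans with a single multiplier $x$ only controls primes in $D(x)$. Knowing $J'+J^2C\mathfrak a=R$ tells you that no prime containing $J'$ contains all of $J^2C\mathfrak a$. A minimal prime of $J'$ may still contain your particular $x$, and there $\hgt(\beta(P)R_x)\ge n$ says nothing. You need $J'+(x)=R$. The natural way to get it is Lemma~\ref{lem:mohan} with $L=(x)$, which requires choosing $x$ beforehand with $\theta(P)+(x)=J$. For $x\in J^2C\mathfrak a$ this needs $\theta(P)+J^2C\mathfrak a=J$, which fails for an arbitrary lift $\theta$; arranging it is exactly what $\dim R/\mathfrak a\le n-1$ is for.

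\emph{The comaximality criterion.} Your ``concretely'' reformulation is off. Given $\beta(P)=J\cap J'$ with $J+J'=R$, the condition $J'+\mathfrak a=R$ is equivalent to $\beta(P)+\mathfrak a\supseteq J$. By contrast, an element of $\beta(P)+\mathfrak a$ congruent to $1$ modulo $J^2C$ would force $J+\mathfrak a=R$.

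\emph{The proposed cure.} A single Eisenbud--Evans application ``keeping track of $V(\mathfrak a)$ separately'' is not an argument. Chains can run from primes of height $<n$ up into $V(\mathfrak a)$, so a generalized dimension function on the union of the two sets of primes can exceed $n-1$, and rank $n$ no longer suffices.

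\emph{The missing idea.} No simultaneous choice is needed; the paper proceeds in sequence.
\begin{enumerate}
\item Over $\bar R=R/\mathfrak a$, where $n>\dim\bar R$, \cite[Lemma 4.4]{BK03} lifts $\bar\theta$ to a surjection $\bar P\twoheadrightarrow\bar J$. Lifting the correction to $\operatorname{Hom}_R(P,J^2C)$ yields a lift of $\alpha$ whose image $K$ satisfies $K+(J\cap\mathfrak a)=J$.
\item This forces $K+J^2C\mathfrak a=J$. The paper uses Mohan Kumar's element; directly, $J=K+(JC)J$ gives $J=K+(JC)^2J=K+(JC)^2(J\cap\mathfrak a)\subseteq K+J^2C\mathfrak a$. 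So $\alpha$ lifts to a surjection $P/JP\twoheadrightarrow J/J^2C\mathfrak a$.
\item Now the standard moving lemma with $C\mathfrak a$ in place of $C$ \cite[Lemma 2.6]{JZ} finishes. Its Mohan Kumar element $x$ lies in $J^2C\mathfrak a$, and the Eisenbud--Evans correction $x\gamma$ does not affect the reduction modulo $J^2C$. Then $J'+(x)=R$ delivers comaximality with $J$, $C$, $\mathfrak a$ and $\hgt J'\ge n$ all at once.
\end{enumerate}
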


\begin{proof}
As $P$ is projective, $\alpha$ lifts to an $R$-linear map $\beta_{0}\colon P\to J$,
and then $\beta_{0}(P)+(J^{2}C)=J$.

\smallskip
\noindent\emph{Step 1.}
Let bar denote reduction modulo $\mathfrak{a}$ and put
$L=(J^{2}C+\mathfrak{a})/\mathfrak{a}\subseteq\bar{J}^{2}$. Then
$\bar{\beta_{0}}\colon\bar{P}\twoheadrightarrow\bar{J}/L$ is a surjection and
$\bar{P}$ is a projective $\bar{R}$-module of rank $n$ with
\[
  n\ \ge\ \dim\bar{R}+1\ \ge\ \dim\bigl(\bar{R}/\mathcal{J}(\bar{R})\bigr)+1 .
\]
By \cite[Lemma 4.4]{BK03} there is a surjection
$\bar{\Psi}\colon\bar{P}\twoheadrightarrow\bar{J}$ lifting $\bar{\beta_{0}}$, so
that $(\bar{\Psi}-\bar{\beta_{0}})(\bar{P})\subseteq L$. As $P$ is projective and
$J^{2}C$ surjects onto $L$, we may choose
$\delta\in\operatorname{Hom}_{R}(P,J^{2}C)$ lifting $\bar{\Psi}-\bar{\beta_{0}}$
and replace $\beta_{0}$ by $\beta_{0}+\delta$. This alters $\beta_{0}$ only
modulo $J^{2}C$, so it is still a lift of $\alpha$ with
$\beta_{0}(P)+(J^{2}C)=J$, and, writing $K=\beta_{0}(P)$, we now have
\begin{equation}\label{eq:onto-mod-a}
  K+(J\cap\mathfrak{a})=J .
\end{equation}

\smallskip
\noindent\emph{Step 2: $\alpha$ lifts to a surjection
$\alpha'\colon P/JP\twoheadrightarrow J/(J^{2}C\mathfrak{a})$.}
By \ref{lem:mohan}, there exists $b_{0}\in J^{2}C$ with $K+(b_{0})=J$. Since
$b_{0}\in J^{2}=(K+(b_{0}))^{2}\subseteq K+(b_{0}^{2})$, we may write
$b_{0}=\kappa_{0}+cb_{0}^{2}$ with $\kappa_{0}\in K$ and $c\in R$. Put
$b_{1}=cb_{0}\in J^{2}C$. Then $b_{0}=\kappa_{0}+b_{1}b_{0}$, so $K+(b_{1})=J$,
and
\[
  b_{1}-b_{1}^{2}=c\bigl(b_{0}-cb_{0}^{2}\bigr)=c\kappa_{0}\in K .
\]
As $b_{1}\in J$, \eqref{eq:onto-mod-a} allows us to write $b_{1}=\kappa+a$ with
$\kappa\in K$ and $a\in J\cap\mathfrak{a}$. Set $b=a\,b_{1}$. Then
\[
  b=b_{1}^{2}-\kappa b_{1}= b_{1}^{2}= b_{1} \pmod K ,
\]
the last congruence because $b_{1}-b_{1}^{2}\in K$; hence $K+(b)=K+(b_{1})=J$.
On the other hand $b=a\,b_{1}\in\mathfrak{a}\cdot J^{2}C=J^{2}C\mathfrak{a}$.
Therefore
\[
  \beta_{0}(P)+(J^{2}C\mathfrak{a})=K+(b)=J ,
\]
so $\beta_{0}$ induces a surjection
$\alpha'\colon P/JP\twoheadrightarrow J/(J^{2}C\mathfrak{a})$, which lifts
$\alpha$ since $J^{2}C\mathfrak{a}\subseteq J^{2}C$.

\smallskip
\noindent\emph{Step 3.}
Apply \cite[Lemma 2.6]{JZ} to $\alpha'$, with the ideal $C\mathfrak{a}$ in place
of $C$. We obtain an ideal $J'\subset R$ and a surjection
$\beta\colon P\twoheadrightarrow J\cap J'$ with $(J\cap C\mathfrak{a})+J'=R$ and
$\operatorname{ht}(J')\ge n$, and such that $\beta$ is a lift of $\alpha'$. As
$\alpha'$ lifts $\alpha$, assertion (2) follows. Finally
$J\cap C\mathfrak{a}$ is contained in each of $J$, $C$ and $\mathfrak{a}$.
\end{proof}

We recall the notion  of analytic isomorphism and a result of Nashier \cite[Proposition 1.3]{Nas83}.

\begin{definition}\label{def:analytic}
Let $A\subseteq B$ be an extension of rings and $0\neq f\in A$ a non-zero divisor in $A$. We say that $A\subseteq B$ is an
\emph{analytic isomorphism along $f$} if
\[
A/(f)\;\xrightarrow{\ \sim\ }\;B/(f),
\]
or equivalently if $B=A+fB$ and $fB\cap A=fA$.
\end{definition}

It follows at once from the definition that if $A_1\subseteq A$ is an analytic isomorphism
along $f$, then it is an analytic isomorphism along $f^m$ for every positive integer $m$.

\begin{proposition}\label{prop:nashier}
Let $A$ be a subring of a ring $B$, let $I\subseteq B$ be an ideal and set $I_1=I\cap A$.
Suppose there is an element $f\in I_1$ such that $A\subseteq B$ is an analytic isomorphism
along $f$. Then the following holds
\begin{enumerate}
\item $A/I_1\simeq B/I$;
\item $I=I_1A$;
\item $I_1/I_1^2\simeq I/I^2$.
\end{enumerate}
\end{proposition}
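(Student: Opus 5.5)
We prove the three assertions in order. Throughout we use only the two equivalent forms of the hypothesis: $B=A+fB$ and $fB\cap A=fA$.

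\emph{Step 1: $A/I_1\simeq B/I$.} The map $A/I_1\to B/I$ induced by inclusion is injective by definition, since $I_1=I\cap A$. For surjectivity, take $b\in B$. From $B=A+fB$ write $b=a+fb'$ with $a\in A$ and $b'\in B$. Since $f\in I_1\subseteq I$, we have $fb'\in I$, so $b\equiv a \pmod I$. This proves (1).

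\emph{Step 2: $I=I_1B$.} Here we read the conclusion as $I=I_1B$; the statement writes $I_1A$, but $I_1A=I_1$, which presumably is not what is meant. The inclusion $I_1B\subseteq I$ is clear. Conversely, let $x\in I$ and write $x=a+fb$ with $a\in A$ and $b\in B$. Then $a=x-fb\in I\cap A=I_1$, and $fb\in I_1B$ because $f\in I_1$. Hence $x\in I_1B$.

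\emph{Step 3: $I_1/I_1^2\simeq I/I^2$.} This is the main point. The inclusion $I_1\subseteq I$ induces a map $\phi\colon I_1/I_1^2\to I/I^2$.

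\emph{Surjectivity of $\phi$.} By Step 2, $I=I_1B=I_1(A+fB)\subseteq I_1+I_1 f B\subseteq I_1+I^2$, using $f\in I$.

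\emph{Injectivity of $\phi$.} We need $I^2\cap A=I_1^2$. Apply Step 1 with $I^2$ in place of $I$. Since $f^2\in I^2\cap A$ and $A\subseteq B$ is analytic along $f^2$ (noted right after Definition~\ref{def:analytic}), Step 1 gives $A/(I^2\cap A)\simeq B/I^2$, and Step 2 gives $I^2=(I^2\cap A)B$.

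It remains to identify $I^2\cap A$ with $I_1^2$. One inclusion is clear, since $I_1^2\subseteq I^2\cap A$. For the other, by Step 2 we have $I^2=I_1^2B=I_1^2(A+f^2B)$. So any $x\in I^2\cap A$ can be written as $x=y+f^2z$ with $y\in I_1^2$ and $z\in I_1^2B$. Then $f^2z=x-y\in f^2B\cap A=f^2A$, so $z\in A$ since $f$ is a non-zero divisor in $B$ (the extension is assumed to be by non-zero divisors). Then $z\in I_1^2B\cap A$, which is a priori circular.

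To break the circularity, apply Step 1 to the ideal $J=I_1^2B$ instead of $I^2$. This is legitimate because $f^2\in I_1^2$ gives $f^2\in J\cap A$. Step 1 then gives $A/(J\cap A)\simeq B/J$. Note that $J=I^2$ by Step 2.

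The key claim is that $J\cap A=I_1^2$. For $x\in J\cap A$, write
\[
x=\sum a_ia_j'\,b_{ij},\qquad a_i,a_j'\in I_1,\ b_{ij}\in B,
\]
and decompose $b_{ij}=\alpha_{ij}+f^2\beta_{ij}$ with $\alpha_{ij}\in A$ and $\beta_{ij}\in B$. This yields $x\in I_1^2+f^2 I_1^2B$. Iterating the same decomposition on the remainder gives
\[
x\in I_1^2+f^{2m}B\cap A=I_1^2+f^{2m}A
\]
for every $m$. With $m=1$ this already gives $x\in I_1^2+f^2A\subseteq I_1^2$, because $f^2\in I_1^2$. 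So $J\cap A=I_1^2$ without any need to pass to a limit. Hence $I^2\cap A=I_1^2$ and $\phi$ is injective.

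The only delicate bookkeeping is in Step 3, namely verifying $I^2\cap A=I_1^2$. The argument above settles it by using $f^2\in I_1^2$ together with the condition $f^2B\cap A=f^2A$.
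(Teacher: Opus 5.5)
The paper gives no proof of this proposition; it imports it from Nashier's Proposition 1.3. Your argument is a correct, self-contained replacement, and you are right that (2) should read $I=I_1B$.

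\textbf{What your proof uses.} Parts (1), (2) and the surjectivity half of (3) need only $B=A+fB$ and $f\in I_1$. Your final injectivity argument is sound, but it can be much shorter. Once $x\in I^2\cap A$ is written as $x=y+f^2z$ with $y\in I_1^2$ and $z\in B$, you have $x-y\in f^2B\cap A=f^2A\subseteq I_1^2$, and you are done. So the attempt to show $z\in A$, the ``circularity'', the auxiliary ideal $J$ and the iteration over $m$ can all be deleted.

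\textbf{The hypothesis to make explicit.} The one genuine input is $f^2B\cap A=f^2A$, which you take from the remark after Definition~\ref{def:analytic}. That remark, and with it part (3), needs $f$ to be a non-zero-divisor of $B$, not merely of $A$ as the definition states. Suppose $f^2b=a\in A$. Then $a\in fB\cap A=fA$, so $a=fa'$ with $a'\in A$, and $f(fb-a')=0$. You can conclude $a'=fb\in fB\cap A=fA$, hence $a\in f^2A$, only when $f$ is regular on $B$.

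Without this the statement is false. Take $A=k[t]\subset B=k\times k[t,t^{-1}]$ via $p\mapsto(p(0),p)$, and $f=t$.
\begin{itemize}
\item Then $B=A+tB$ and $tB\cap A=tA$, so this is an analytic isomorphism along $t$ in the paper's sense.
\item Put $I=tB=0\times k[t,t^{-1}]$. Then $I_1=tA$, so $I_1/I_1^2\cong k$.
\item But $I$ is generated by the idempotent $(0,1)$, so $I=I^2$ and $I/I^2=0$.
\end{itemize}

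In the paper's application $B=\Rt[T]$ is a domain, so your parenthetical assumption that $f$ is a non-zero-divisor in $B$ holds and the proof goes through. You should state that assumption up front and say that it is used exactly to get $f^2B\cap A=f^2A$.
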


\section{Main results}
\begin{theorem}\label{thm:main}
Let $R$ be an affine domain of dimension $d$ over an infinite perfect field $k$ whose
normalisation $\Rt$ is smooth over $k$; let $\cc$ be the conductor ideal of $R\subset \Rt$. Let $I\subseteq R[T]$ be an ideal
of height $n$ with $\mu(I/I^2T)=n$ and $2n\ge d+3$, and suppose $I=(f_1,\dots,f_n)+I^2T$ is
given. If $\hgt\cc\ge d-n+2$, then $I=(h_1,\ldots,h_n)$,  $h_i-f_i\in I^2T$.

\end{theorem}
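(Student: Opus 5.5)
We would prove Theorem~\ref{thm:main} by transporting the problem to the smooth ring $B=\Rt[T]$, solving it there by \cite{BK03}, and descending back to $A=R[T]$ via an analytic isomorphism along an element of the conductor $\CC=\cc[T]$ (Lemma~\ref{lem:square}). Descent needs an element $f\in I\cap\CC$ such that $A\subseteq B$ is an analytic isomorphism along $f$, and $I$ itself need not meet $\CC$ suitably. So we first move $I$ off the singular locus in a controlled way, using the moving lemma, and only then pass to $B$.

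\emph{Step 1 (moving).} Put $P=A^n$ and $\alpha\colon P/IP\twoheadrightarrow I/I^2T$ given by the $f_i$. Apply Lemma~\ref{lem:moving-a'} with $J=I$, $C=(T)$, and $\mathfrak a=\CC$. This is legitimate because $\dim A/\CC=\dim R/\cc+1\le d-(d-n+2)+1=n-1$, and this is exactly where the hypothesis $\hgt\cc\ge d-n+2$ enters. We obtain $\beta\colon A^n\twoheadrightarrow I\cap I'$ lifting $\alpha$ modulo $I^2T$, with $I'+I=A$, $I'+T A=A$, $I'+\CC=A$, and $\hgt I'\ge n$. Then $I'$ is an ideal of height $n$ in $A$ that avoids $V(\CC)$ and contains an element $\equiv1\pmod T$. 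By the Chinese remainder theorem, $\beta$ gives a surjection $I'/I'^2\twoheadleftarrow (A/I')^n$, and the problem reduces to showing that this induced set of generators of $I'/I'^2$ (or one twisted by $T$) lifts to generators of $I'$. By the standard subtraction principle (as in \cite{BK03}, Bhatwadekar--Sridharan), that suffices to give the required $h_i$ with $h_i-f_i\in I^2T$.

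\emph{Step 2 (descent for $I'$).} Since $I'+\CC=A$, choose $f\in\CC$ with $f\equiv1$ modulo $I'$. We then use $\CC$ and $I'$ comaximality: $A_{1+\CC}$-type localisation, or more concretely the Milnor patching square $A\to B$, $A/\CC\to B/\CC B$. Let $I''=I'B$. Because $\CC B=\CC$ is a common ideal, $A\subseteq B$ is an analytic isomorphism along any nonzerodivisor $g\in\CC$ (namely $B/gB\cong A/gA$ fails in general). So instead we argue directly. Since $I'+\CC=A$, we have $A/I'\cong B/I'B$ and $I'/I'^2\cong I'B/(I'B)^2$, so Proposition~\ref{prop:nashier} is replaced by this comaximal version. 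Also $\hgt I'B=\hgt I'\ge n$ by Lemma~\ref{lem:height}.

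\emph{Step 3 (smooth case).} In $B=\Rt[T]$, which is smooth of dimension $d+1$, the ideal $I'B$ has height $n$ with $2n\ge d+3$. Its generators modulo $(I'B)^2$ carry the extra information modulo $T$ (since $I'$ is comaximal with $T$, the $T$-part is a unit condition). By \cite{BK03} they lift to $I'B=(G_1,\dots,G_n)$. It remains to descend: we need $G_i$ chosen in $I'$ rather than $I'B$. Since $\CC\subseteq A$ and $\CC+I'=A$, we have $I'B=I'+\CC I'B$, so we adjust each $G_i$ by elements of $\CC I'B\subseteq I'\CC\cdot B$. Using an elementary transformation we then make the generators lie in $A$ and still generate, with $I'B\cap A=I'$.

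\emph{Main obstacle.} The main obstacle is this final descent: generators of $I'B$ need not come from $A$, and adjusting them within $\CC I'B$ must preserve generation. We would handle it by Milnor patching. The surjection $B^n\to I'B$ and the surjection $(A/\CC)^n\to A/\CC=(I'+\CC)/\CC$ agree over $B/\CC$ up to an element of $GL_n(B/\CC)$. Here $\dim B/\CC\le n-1$ lets us reduce that element to $E_n(B/\CC)$, which lifts to $B$, so the two data patch to a surjection $A^n\twoheadrightarrow I'$.
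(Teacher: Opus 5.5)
Your overall architecture matches the paper's: move $I$ with Lemma~\ref{lem:moving-a'} taking $C=(T)$ and $\mathfrak a=\CC$, lift the residual ideal in the smooth ring $B$ by \cite[Theorem 4.13]{BK03}, descend to $A$, and subtract. The choice of $\mathfrak a=\CC$ is exactly where $\hgt\cc\ge d-n+2$ enters, via $\dim A/\CC=d-\hgt\cc+1\le n-1$.

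\emph{The descent is not correct as written.} You rightly flag it as the crux, but two intermediate claims are false. First, $A\subseteq B$ is never an analytic isomorphism along a $g\in\CC$ when $R$ is not normal: $gB\subseteq\CC\subseteq A$, so $B=A+gB$ would force $B=A$. Second, $I'B=I'+\CC I'B$ fails: $\CC I'B=\CC I'\subseteq I'$, so it would give $I'B=I'$ and then $B=A+I'B=A$. The right element is $l=1-f\in I'$. Since $(1-l)B\subseteq\CC\subseteq A$, one gets $B=A+lB$ and $lB\cap A=lA$, and this is what the paper feeds into Proposition~\ref{prop:nashier}.

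More seriously, your patching step rests on ``$\dim B/\CC\le n-1$ lets us reduce to $E_n(B/\CC)$'', and dimension alone does not give this. In the boundary case $\hgt\cc=d-n+2$ one has $\dim B/\CC=n-1$. The stable range only gives transitivity of $E_n$ on unimodular rows of length $\ge\dim+2$, and a row of length $\dim+1$ need not even be completable. What rescues your route is the polynomial structure $B/\CC=(\Rt/\cc)[T]$ with $\dim\Rt/\cc=d-\hgt\cc\le n-2$. Suslin's theorem ($E_n(S[T])$ acts transitively on $\mathrm{Um}_n(S[T])$ for $n\ge\max(3,\dim S+2)$) then gives $\sigma\in E_n(B/\CC)$ with $\bar G\sigma=e_1$. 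Here $\bar G$ is the image of $(G_1,\dots,G_n)$, which is unimodular because $I'B+\CC=B$.

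You must also lift $\sigma$ to $\tilde\sigma\in E_n(B)$ with $\tilde\sigma\equiv 1$ modulo $(I'B)^2$. This is possible by the Chinese remainder theorem since $(I'B)^2+\CC=B$, and it keeps the residues modulo $(I'B)^2$ needed for subtraction. Then $G'=G\tilde\sigma$ has entries in $A\cap I'B=I'$. These entries generate $I'$: for $x\in I'$, $x=xG'_1+x(1-G'_1)$ with $x(1-G'_1)\in\CC I'=\CC I'B=\sum_i G'_i\CC$.

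\emph{The final reduction also does not give what you claim.} The subtraction principle in $A=R[T]$ needs $2n\ge\dim A+3=d+4$, which fails in the boundary case $2n=d+3$ (as in Example~\ref{ex:hyperbola}). Even where it applies, it only returns $h_i-f_i\in I^2$, not $I^2T$. The paper avoids both problems by passing to $R(T)$, which has dimension $d$. The subtraction principle \cite[Proposition 3.1]{BS00-1} there gives $IR(T)=(c_1,\dots,c_n)$ with $c_i-f_i\in I^2R(T)$. The $R(T)$-to-$R[T]$ lifting result \cite[Proposition 7.4]{KT23} then produces $I=(h_1,\dots,h_n)$ with $h_i-f_i\in I^2T$. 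This detour through $R(T)$ is the ingredient missing from your Step~1.
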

\begin{proof}
    Let $A=R[T]$, $B=\Rt[T]$. 
Applying moving lemma~\ref{lem:moving-a'} we get $I_0$ ideal of $A$ such that
\begin{equation}\label{eq:move}
I\cap I_0=(b_1,\dots,b_n),\quad b_i-f_i\in I^2T,\quad I+I_0=A,\quad I_0+\CC=A,\quad
I_0+I\cap(T)=A,
\end{equation}
and $I_0=A$ or $\hgt I_0=n$. In the first case $I=(b_1,\dots,b_n)$ and we are done; assume
$\hgt I_0=n$. Let $\It_0=I_0B$. By ~\ref{lem:height},
$\hgt\It_0=n$. 
From $I+I_0=A$ we get $I\cap I_0=II_0$, so
\[
(b_1,\dots,b_n)+I_0^2=II_0+I_0^2=I_0(I+I_0)=I_0,
\] 
 Moreover $I\cap (T)\subseteq TA$, so \eqref{eq:move} gives $I_0+TA=A$
and therefore $I_0(0)=R$. By \cite[Remark 3.9]{BS98} a surjection $A^n\twoheadrightarrow
I_0/I_0^2$ lifts to $A^n\twoheadrightarrow I_0/I_0^2T$ as if the induced surjection at
$T=0$ lifts to a surjection $R^n\twoheadrightarrow I_0(0)$, which is vacuous here. 
Hence
\begin{equation}\label{eq:Tpres}
I_0=(b_1',\dots,b_n')+I_0^2T,\qquad b_i'-b_i\in I_0^2 .
\end{equation}
Since $\Rt$ is a smooth affine $k$-domain
of dimension $d$, $k$ is infinite perfect, and $2n\ge d+3$, \cite[Theorem 4.13]{BK03} applied
to $\It_0\subseteq B$ yields
\begin{equation}\label{eq:up}
\It_0=(\widetilde F_1,\dots,\widetilde F_n),\qquad \widetilde F_i-b_i'\in\It_0^2T .
\end{equation}

Note $\It_0\cap A=I_0$.  Let $x\in \It_0\cap A$. We have, by (\ref{eq:move}), $u+c=1$ for some $u\in I_0$ and $c\in \CC$. This gives $xu+xc=x$. As $x\in A$ and $u\in I_0$, we have $xu\in I_0$. Also, $xc\in \CC \It_0= I_0A=I_0.$ This proves the equality.\\
 As $\It_0+\CC=B$, we have $I_0+\CC=A$. Hence there exists $c\in \CC$ such that $l=1-c\in I_0$. We may assume $ht(l)=1$. If not choose $l'$ which is not in any minimal prime of $B$, then we have $ht(l+l'-ll')=1$. Let $l''=l+l'=l''$, then we have $1-l''\in C$ and $ht(l'')=1$ and we may replace $l$ with $l''$. Therefore we have $A\subset B$ is an analytic isomorphism along $l\in I_0$. By \ref{prop:nashier}
\begin{equation}
    A/I_0\simeq B/I_0B,
     \qquad l\in I_0\text{ gives } I_0/I_0^2\simeq \It_0/\It_0^2
\end{equation}
Let $a_1,\ldots, a_n$ be the generators of $I_0$ corresponding to $\widetilde F_1,\ldots,\widetilde F_n$. Then we have
$  a_i-b_i\in I_0^2 $.
Let $R(T)$ denote the ring obtained by inverting all the monic polynomials in $R[T]$. Then $\dim(R(T))=d$ and applying subtraction principle \cite[Proposition 3.1]{BS00-1} we have  
\begin{equation}
    IR(T)= (c_1,\ldots, c_n), \qquad \text{with } c_i-b_i\in I^2R(T)
\end{equation}
It follows $c_i-f_i\in I^2R(T)$. Moreover, by
 \cite[Proposition 7.4]{KT23} we have $I=(h_1,\ldots,h_n)$ with $h_i-f_i\in I^2T$.

\end{proof}
\begin{example}\label{ex:hyperbola}
Take $d=5$, $n=4$ (so $2n=d+3$), $\Rt=k[x_1,\dots,x_5]$, $p=(0,0,0,0,0)$,
$q=(1,0,0,0,0)$, and $R=\{f\in\Rt\mid f(p)=f(q)\}$.

 \[\begin{tikzcd}[ampersand replacement=\&,cramped]
	R \& k \\
	{\tilde{R}=k[X_1,\cdots,X_5]} \& {\tilde{R}/\mathfrak{m}_p\times\tilde{R}/\mathfrak{m}_q=k\times k}
	\arrow[from=1-1, to=1-2]
	\arrow[from=1-1, to=2-1]
	\arrow[from=1-2, to=2-2]
	\arrow[from=2-1, to=2-2]
\end{tikzcd}\]    
We have $\cc=\mm_p\cap\mm_q$, a maximal ideal of $R$ with
$R/\cc=k$; thus $\hgt\cc=5\ge d-n+2=3$ and Theorem~\ref{thm:main} applies. Explicitly
$R=k[x_2,\dots,x_5,\;x_1^2-x_1,\;x_1x_2,\dots,x_1x_5]$.  note $x_2,x_3,x_4,x_5\in\cc$ and let
\[
I_0=(x_2T-1,\;x_3,\;x_4,\;x_5)\subseteq R[T],
\]
an ideal of height $4$. Since $x_2\in\cc$ we have $1=-(x_2T-1)+T x_2$, so $I_0+\CC=A$:
the ideal $I_0$ is exactly of the kind \ref{thm:main} produces. However
$I_0\Rt[T]\cap\Rt=(x_3,x_4,x_5)$, and one checks $I_0\cap
R=(x_3,x_4,x_5,x_1x_3,x_1x_4,x_1x_5)R$, a prime of height three contained in $\cc$. Hence
\[
(I_0\cap R)+\cc=\cc\ne R,
\]
so $I_0\cap R$ lies in the singular maximal ideal of $R$.
\end{example}

We now derive interesting consequences of \ref{thm:main}.
\begin{theorem}\label{thm:euler}
    Let $R$ be a affine domain of dimension $d\ge3$  over infinite perfect field $k$ of characteristic zero whose normalisation is smooth, with $\hgt\cc\ge 2$. Then the natural map $E^d(R)\xrightarrow[]{}E^d(R[T])$ is an isomorphism.
\end{theorem}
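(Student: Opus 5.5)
The plan is to follow the standard route for $\mathbb{A}^1$-invariance of Euler class groups (Das, Bhatwadekar--Sridharan, Bhatwadekar--Keshari). Theorem~\ref{thm:main} replaces the smooth-case lifting theorem at the key step. First observe that with $n=d$ the hypothesis $\hgt\cc\ge d-n+2$ becomes $\hgt\cc\ge 2$, and $2n\ge d+3$ becomes $d\ge 3$; both are exactly what we assume. The natural map $\iota\colon E^d(R)\to E^d(R[T])$ sends $(J,\omega_J)$ to $(JR[T],\omega_J\otimes R[T])$. The map $T\mapsto 0$ gives $\epsilon\colon E^d(R[T])\to E^d(R)$ with $\epsilon\circ\iota=\mathrm{id}$. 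So $\iota$ is injective, and it remains to show $\iota$ is surjective. Equivalently, it suffices to show $\ker\epsilon=0$.

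For surjectivity, take an element of $E^d(R[T])$. By the usual moving argument, which uses Lemma~\ref{lem:moving-a'} with $\mathfrak a$ chosen so that the new ideal avoids $\CC$, represent it by a single pair $(I,\omega_I)$. Here $I\subseteq R[T]$ has height $d$ and $\omega_I\colon (R[T]/I)^d\twoheadrightarrow I/I^2$. Set $J=I(0)\subseteq R$ with the induced $\omega_J$. We want $(I,\omega_I)=(JR[T],\omega_J\otimes R[T])$ in $E^d(R[T])$. Using the addition and subtraction principles, this reduces to the following claim: if $(I,\omega_I)$ has $(I(0),\omega_I(0))$ trivial in $E^d(R)$, then $\omega_I$ lifts to a surjection $R[T]^d\twoheadrightarrow I$.

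To prove the claim, first lift $\omega_I$ to $I=(f_1,\dots,f_d)+I^2$. Since $I(0)$ is generated by lifts of $\omega_I(0)$, the case $T=0$ is handled. Then \cite[Remark 3.9]{BS98} (already used in the proof of Theorem~\ref{thm:main}) upgrades this to $I=(f_1',\dots,f_d')+I^2T$ with $f_i'-f_i\in I^2$. Theorem~\ref{thm:main} with $n=d$ then gives $I=(h_1,\dots,h_d)$ with $h_i-f_i'\in I^2T$. This is a surjective lift of $\omega_I$, so $(I,\omega_I)=0$. The same argument shows $\ker\epsilon=0$, because an element of $\ker\epsilon$ represented by $(I,\omega_I)$ has $(I(0),\omega_I(0))=0$.

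The main obstacle is the reduction to a single pair with $(I(0),\omega_I(0))$ trivial. The relation in $E^d(R)$ saying $(J,\omega_J)=0$ only means that $J$ is generated up to moving, since Euler class groups are defined via free abelian groups modulo relations. We need the fact that $(J,\omega_J)=0$ in $E^d(R)$ implies $\omega_J$ lifts to a surjection $R^d\twoheadrightarrow J$. For smooth $R$ this is \cite{BS00-1}. Here I would prove it by passing to $\Rt$ through Proposition~\ref{prop:nashier}, after moving $J$ away from $\cc$ with Lemma~\ref{lem:moving-a'}. Then I would use the smooth case for $\Rt$ and pull generators back via the analytic isomorphism along an element $l\in J$ with $1-l\in\cc$, as in the proof of Theorem~\ref{thm:main}. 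The characteristic zero hypothesis is used where the smooth $\Rt$ results require it.
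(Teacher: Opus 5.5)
Your skeleton matches the paper's: reduce to a pair $(L,\omega_L)$ whose specialisation at $T=0$ vanishes in $E^d(R)$, upgrade $\omega_L$ to generators of $L/L^2T$ via \cite[Remark 3.9]{BS98}, and apply Theorem~\ref{thm:main} with $n=d$. The trouble is the step you single out as the main obstacle. It is not an obstacle, and the detour you propose for it does not work as written.

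That $(J,\omega_J)=0$ in $E^d(R)$ forces $\omega_J$ to lift to a surjection $R^d\twoheadrightarrow J$ is Bhatwadekar--Sridharan's theorem for an arbitrary Noetherian ring of dimension $d\ge 2$. It comes from the addition and subtraction principles and needs no smoothness. It is exactly what the paper uses, without comment, in ``it follows that we can lift $\omega_L$''. Your route through $\Rt$ has two unfilled steps.
\begin{enumerate}
\item To use anything about $\Rt$ you need $(J\Rt,\omega_J\otimes\Rt)=0$ in $E^d(\Rt)$. That requires a homomorphism $E^d(R)\to E^d(\Rt)$, which you never construct.
\item Proposition~\ref{prop:nashier} only identifies $J/J^2$ with $J\Rt/J^2\Rt$. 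So generators of $J\Rt$ pull back to elements $a_i\in J$ with $J=(a_1,\dots,a_d)+J^2$, not $J=(a_1,\dots,a_d)$. Descending genuine generators needs something extra. For example, use $\dim(\Rt/\cc)\le d-2$ to move the generating row to $e_1$ modulo $\cc$ by an elementary matrix over $\Rt$; all entries then lie in $R$ and generate $J$.
\end{enumerate}
In any case, the ``smooth case'' you would invoke for $\Rt$ is this same general theorem, and it applies to $R$ directly.

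Where moving genuinely matters is the reduction you compress into ``using the addition and subtraction principles''. You set $J=I(0)$ and treat $(J,\omega_J)$ as an element of $E^d(R)$. But $I(0)$ may have height $<d$, e.g.\ $I=(x_1,x_2,Tx_3)$ over $k[x_1,x_2,x_3]$. The paper handles this in three steps:
\begin{enumerate}
\item It uses \cite[Lemma 2.9]{Das03} to arrange $I(0)=R$ or $\hgt I(0)=d$.
\item It uses \cite[Lemma 2.12]{Das03} to find $(K,\omega_K)$ with $K$ comaximal with $I\cap R$ and $(I(0),\omega_I(0))+(K,\omega_K)=0$.
\item It applies the lifting argument to $L=I\cap K[T]$, which gives $(I,\omega_I)=-(K[T],\omega_K\otimes R[T])$, an element of the image.
\end{enumerate}
Moving $I$ away from $\CC$ is beside the point, since Theorem~\ref{thm:main} puts no condition on the ideal relative to $\CC$. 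The same height issue means your splitting $\epsilon$ is not defined just by setting $T=0$. Injectivity, which the paper does not address, therefore needs that extra care as well.
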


\begin{proof}
Let $(I,\omega_I)\in E^d(R[T])$ where $I$ is an ideal of $R[T]$ of height
$d$ and $\omega_I$ is a local orientation of $I$. By \cite[Lemma 2.9]{Das03}, without
loss of generality we may assume that either $I(0)=A$ or
$\operatorname{ht}(I(0))=n$. If $I(0)=R$, then we can lift $\omega_I$ to a set of generators of
$I/(I^2T)$ and hence $(I,\omega_I)=0$ by \ref{thm:main}.

Now suppose that $\operatorname{ht}(I(0))=n$. We consider the element $(I(0),\omega_I(0))\in E^d(R)$,
induced by $(I,\omega_I)$. Applying \cite[Lemma 2.12]{Das03}, we can find an ideal $K\subset R$ of height $n$ which is comaximal with $I\cap R$ and a
local orientation $\omega_K$ of $K$ such that $(I(0),\omega_I(0))+(K,\omega_K)=0
\quad\text{in }E^d(R)$. Let $L=I\cap K[T]$. Since the ideals $I$ and $K[T]$ are comaximal,
$\omega_I$ and $\omega_K$ induce $\omega_L:(R[T]/L)^n\longrightarrow L/L^2
$ and we have the following equation in $E^d(R[T])$:
\[
(L,\omega_L)
=
(I,\omega_I)+(K[T],\omega_K\otimes R[T]).
\]

Since
\[
(L(0),\omega_L(0))
=
(I(0),\omega_I(0))+(K,\omega_K)
=0
\]
in $E^d(R),$ it follows that we can lift $\omega_L$ to a set of
generators of $L/(L^2T)$. 
By \ref{thm:main}, we have $(L,\omega_L)=0$, proving $\Phi$ is a surjection. 
\end{proof}
The following is an immediate consequence of \cite[Theorem 2.4]{Das18}.
\begin{corollary}\label{cor:finitefield}
Let $R$ be a affine domain of dimension $d\ge3$  over infinite field $k=\overline{k}$  of characteristic $p\neq 0$ whose
normalisation is smooth, with $\hgt\cc\ge 2$. Then $E^d(R[T])=0$.
\end{corollary}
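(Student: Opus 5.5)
The statement to prove is Corollary~\ref{cor:finitefield}: over $k=\overline{k}$ of characteristic $p\neq 0$, with $\Rt$ smooth, $d\ge 3$ and $\hgt\cc\ge 2$, we want $E^d(R[T])=0$. The plan is to reduce to the vanishing of $E^d$ of the base and then use $\mathbb{A}^1$-invariance, in the spirit of Theorem~\ref{thm:euler}.

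\emph{Step 1: vanishing on the base.} The result \cite[Theorem 2.4]{Das18} gives $E^d(R)=0$ for affine algebras of dimension $d\ge 3$ over an algebraically closed field of positive characteristic (possibly with mild hypotheses). Concretely, every height $d$ ideal $J\subseteq R$ with a local orientation $\omega_J\colon (R/J)^d\twoheadrightarrow J/J^2$ has $(J,\omega_J)=0$, meaning $\omega_J$ lifts to a surjection $R^d\twoheadrightarrow J$. If that theorem requires smoothness or normality, I would instead apply it to $\Rt$ and transport the conclusion back to $R$. The transport uses two earlier results: the moving lemma~\ref{lem:moving-a'} makes $J$ comaximal with $\cc$, and the analytic isomorphism argument (Proposition~\ref{prop:nashier}) then gives $R/J\simeq \Rt/J\Rt$ and $J/J^2\simeq J\Rt/(J\Rt)^2$. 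This is exactly the mechanism used in the proof of Theorem~\ref{thm:main}.

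\emph{Step 2: passing to $R[T]$.} Take $(I,\omega_I)\in E^d(R[T])$. As in the proof of Theorem~\ref{thm:euler}, use \cite[Lemma 2.9]{Das03} to reduce to two cases.
\begin{itemize}
\item If $I(0)=R$, then $\omega_I$ lifts to a set of generators of $I/I^2T$, and Theorem~\ref{thm:main} with $n=d$ gives $(I,\omega_I)=0$. The hypothesis $\hgt\cc\ge 2=d-d+2$ holds, and $2d\ge d+3$ holds since $d\ge 3$.
\item Otherwise $(I(0),\omega_I(0))\in E^d(R)=0$. So $\omega_I(0)$ lifts to a surjection $R^d\twoheadrightarrow I(0)$, which by \cite[Remark 3.9]{BS98} lets $\omega_I$ lift to generators of $I/I^2T$. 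Theorem~\ref{thm:main} then gives $(I,\omega_I)=0$.
\end{itemize}
The auxiliary ideal $K$ used in Theorem~\ref{thm:euler} is not needed here, because the class of $I(0)$ is already trivial.

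\emph{Main obstacle.} The delicate point is matching the hypotheses of \cite[Theorem 2.4]{Das18}. That theorem is stated for $k=\overline{\mathbb{F}}_p$-type fields, and possibly for smooth algebras. If it needs smoothness, the transport through the conductor in Step 1 must be checked carefully. In particular one must verify that orientations correspond under $J/J^2\simeq J\Rt/(J\Rt)^2$ and that generators descend from $\Rt$ to $R$. The descent is where $J+\cc=R$ is used: a surjection $\Rt^d\twoheadrightarrow J\Rt$ restricts, via $J\Rt\cap R=J$ and the analytic isomorphism, to a lift for $R$, possibly after a Mohan Kumar adjustment by Lemma~\ref{lem:mohan}.

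Note also that Theorem~\ref{thm:main} as stated assumes $k$ is perfect, which holds since $k$ is algebraically closed. It appears not to require characteristic zero, unlike Theorem~\ref{thm:euler}, so applying it here is legitimate.
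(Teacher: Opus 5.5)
\textbf{Step 2 is sound.} It is also the natural way Theorem~\ref{thm:main} feeds into the paper's proof, which is only the citation of \cite[Theorem 2.4]{Das18}. With $n=d$ the hypotheses $2d\ge d+3$ and $\hgt\cc\ge 2$ hold, and Theorem~\ref{thm:main} needs only $k$ infinite and perfect. Once $\omega_I(0)$ is global, the auxiliary ideal $K$ of Theorem~\ref{thm:euler} is indeed superfluous. You should still check that what you borrow from \cite{Das03} does not use $\mathbb{Q}\subseteq R$: this means writing a class as a single pair, and Lemma 2.9. That assumption is why Theorem~\ref{thm:euler} is stated only in characteristic zero. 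These are moving-lemma arguments and look characteristic-free.

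\textbf{The gap is Step 1.} It carries all of the characteristic-$p$ content, and you assert it rather than prove it. In the form you state it, namely $E^d(R)=0$ for affine algebras of dimension $d\ge 3$ over any algebraically closed field of positive characteristic, it is false.
\begin{itemize}
\item For smooth affine $R$ over an algebraically closed field, $(J,\omega_J)\mapsto[R/J]$ gives a surjection $E^d(R)\to \mathrm{CH}_0(R)$. It is well defined because a height-$d$ complete intersection has zero cycle class, being a product of principal divisors.
\item Over algebraically closed fields of characteristic $p$ with large transcendence degree, there are smooth affine $d$-folds with $\mathrm{CH}_0\neq 0$. This is the characteristic-$p$ form of Mumford's theorem.
\item Such an $R$ satisfies the corollary's hypotheses with $\cc=R$, and it even has $E^d(R[T])\neq 0$. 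Indeed $(\mm R[T],\omega)\mapsto[\mm R[T]]$, and $\mathrm{CH}^d(R[T])\cong \mathrm{CH}_0(R)$.
\end{itemize}
So the corollary is really a statement over $\overline{\mathbb F}_p$, as its label suggests. Step 1 must be an $\overline{\mathbb F}_p$-specific vanishing theorem, ultimately an arithmetic fact about zero-cycles, cited in a form that is valid for singular, non-normal $R$.

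\textbf{The fallback does not repair this.} First, it needs $E^d(\Rt)=0$, which has the same field restriction. Second, the descent is not automatic. The analytic isomorphism only gives $R/J\cong \Rt/J\Rt$ and $J/J^2\cong J\Rt/(J\Rt)^2$. So from $J\Rt=(g_1,\dots,g_d)$ you get $J=(a_1,\dots,a_d)+J^2$, not $J=(a_1,\dots,a_d)$.
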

As a consequence of \cite[Corollary 4.11]{Das18} and \cite[Corollary 4.4]{BS00}, we obtain the following.
\begin{corollary}\label{cor:unimod}
Let $R$ be a affine domain of dimension $d\ge3$  over infinite perfect field $k$  of characteristic zero whose
normalisation is smooth, with $\hgt\cc\ge 2$. Let $P$ be a projective $R[T]$-module of rank $d$
with trivial determinant such that $P/TP$ has a unimodular element. Then
$P$ has a unimodular element.
\end{corollary}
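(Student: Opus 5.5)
The statement immediately above is Corollary~\ref{cor:unimod}, and the plan is to deduce it from the $\mathbb{A}^1$-invariance of Theorem~\ref{thm:euler}, combined with the Euler class obstruction for unimodular elements.

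\emph{Step 1: set up the obstruction.} Fix an isomorphism $\chi\colon R[T]\simeq \wedge^d P$, so that $P$ has trivial determinant and an orientation. By \cite[Corollary 4.4]{BS00}, there is a well-defined Euler class $e(P,\chi)\in E^d(R[T])$. It is obtained from a generic surjection $\alpha\colon P\twoheadrightarrow I$ onto an ideal $I$ of height $d$; the local orientation $\omega_I$ is induced by $\alpha$ and $\chi$. Since $d\ge 3$ and $2d\ge \dim R[T]+2=d+3$, we are in the range where \cite[Corollary 4.11]{Das18} applies. That result says $P$ has a unimodular element if and only if $e(P,\chi)=0$. The proof of that criterion for $R[T]$ needs exactly two ingredients: the Nori-type lifting property, which is supplied by Theorem~\ref{thm:main} with $n=d$, and the identification $E^d(R)\simeq E^d(R[T])$.

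\emph{Step 2: check the hypotheses.} The condition $\hgt\cc\ge d-n+2$ required by Theorem~\ref{thm:main} becomes $\hgt\cc\ge 2$ when $n=d$, which is our hypothesis. So Theorem~\ref{thm:main} and Theorem~\ref{thm:euler} are both available for $R$.

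\emph{Step 3: compute the Euler class.} Specialising at $T=0$ is compatible with Euler classes, so under the map $E^d(R[T])\to E^d(R)$, $T\mapsto 0$, the class $e(P,\chi)$ goes to $e(P/TP,\chi(0))$. Since $P/TP$ has a unimodular element, $e(P/TP,\chi(0))=0$ in $E^d(R)$. Now $E^d(R)\to E^d(R[T])$ is an isomorphism by Theorem~\ref{thm:euler}, and $T\mapsto 0$ is a left inverse of it, so $T\mapsto 0$ is also an isomorphism. Therefore $e(P,\chi)=0$, and Step~1 gives a unimodular element of $P$.

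\emph{Expected obstacle.} The main difficulty is the direction ``$e(P,\chi)=0\Rightarrow$ unimodular element'' over $R[T]$. Here $\dim R[T]=d+1$ exceeds the rank, so this implication is not formal: it needs the rank-$d$ theory over the polynomial ring. I would first try to follow the argument of \cite{Das18}, which passes to $R(T)$ and patches. The only properties of a smooth base used there are Nori lifting, now Theorem~\ref{thm:main}, and homotopy invariance, now Theorem~\ref{thm:euler}. If that argument turns out to use smoothness elsewhere, for example through $\Rt$ being regular when invoking \cite{BK03}, I would again transport ideals to $\Rt[T]$. This works via the analytic isomorphism of Proposition~\ref{prop:nashier}, after first moving the ideals off $V(\CC)$ using Lemma~\ref{lem:moving-a'}, as in the proof of Theorem~\ref{thm:main}.
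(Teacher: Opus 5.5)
Your argument is essentially the paper's. The paper obtains the corollary by citing \cite[Corollary 4.11]{Das18} and \cite[Corollary 4.4]{BS00} and gives no further detail. Your chain is what those citations encode: $e(P,\chi)$ specialises to $e(P/TP,\chi(0))=0$, and the isomorphism $E^d(R)\simeq E^d(R[T])$ of Theorem~\ref{thm:euler} (available because $n=d$ turns $\hgt\cc\ge d-n+2$ into $\hgt\cc\ge 2$, and $d\ge 3$ gives $2d\ge d+3$) then forces $e(P,\chi)=0$, whence a unimodular element.
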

\section*{Acknowledgments}
The author acknowledges the financial support provided by the Institute Post-Doctoral Fellowship, Indian Institute of Technology Madras, during the course of this research.

\end{document}